\title[A note on the ring of invariant jet differentials]{A note on the ring of invariant jet differentials}
\author{Mohammad Reza Rahmati}
\thanks{}
\address{ Abdus Salam school of Mathematical Sciences, GCU, Lahore, Pakistan
\hfill\break 
\hfill\break \\
\hfill\break }
\email{mrahmati@cimat.mx, rewayat.khan@gmail.com}
\newcommand{\comments}[1]{}
\newtheorem{theorem}{Theorem}[section]
\newtheorem{remark}[theorem]{Remark}
\keywords{Demailly-Semple tower, Jets of entire curves, Differential Galois group, Differential field, Wronskian, Ring of invariant jets}
\subjclass{32Q45, 30D35, 14E99}
\begin{document}

\begin{abstract}
In this article we briefly discuss the finite generation of fiber rings of invariant $k$-jets of holomorphic curves in a complex projective manifold, using differential Galois theory. 
\end{abstract}

\maketitle



\section{Introduction}

\vspace{0.3cm}

Let $X$ be a $n$-dimensional complex projective manifold. In \cite{GG} Green and Griffiths introduce the bundle of germs of $k$-jet differentials on the manifold $X$ as a sequence of projective bundles  $\pi_k:X_k \to X_{k-1} \to ...\to X_1 \to X$ which are defined inductively as in \cite{D2}. The fibers of $X_k$ at $x \in X$ is the set of equivalence classes of germs of holomorphic maps $f:(\mathbb{C},0) \to (X,x)$ with equivalence relation 

\begin{equation}
f \equiv_k g \ \ \ \Leftrightarrow \ \ \ f^{(j)}(0)=g^{(j)}(0), \ (0 \leq j \leq k).
\end{equation}

\noindent
By choosing local holomorphic coordinates around $x$, the elements of the fiber $J_{k,x}$ can be represented by the Taylor expansion 

\begin{equation}
f(t)=tf'(0)+\frac{t^2}{2!}f''(0)+...+\frac{t^k}{k!}f^{(k)}(0)+O(t^{k+1}).
\end{equation}

\noindent
Setting $f=(f_1,...,f_n)$ on open neighborhoods of $0 \in \mathbb{C}$, the fiber is 

\begin{equation}
J_{k,x}=\{(f'(0),...,f^{(k)}(0))\} = \mathbb{C}^{nk}.
\end{equation}

\noindent
The action of $\mathbb{C}^*$ on $k$-jets is $\lambda. (f'(0),...,f^{(k)}(0)) =(\lambda .f'(0),...,\lambda^k .f^{(k)}(0))$. The Green-Griffiths bundles $E_{k,m}T_X^*$ are defined by $J_k/\mathbb{C}^*$. Alternatively one defines sheaf of sections $E_{k,m}V^*$ of weighted homogeneous polynomials along the fibers in $X_k$ in the jet coordinates $\xi_1,...,\xi_k$ with weights $(1,2,...,k)$ respectively and set $E_{k,m}V^*=\bigoplus_m E_{k,m}V^*$, see \cite{D2}. 

Let $G_k$ be the group of local holomorphic automorphisms of $(\mathbb{C},0)$ of the form 

\begin{equation}
t \longmapsto \phi(t)=\alpha_1t+\alpha_2t^2+...+\alpha_kt^k, \qquad \alpha_1 \in \mathbb{C}^*. 
\end{equation}

\noindent
Its action on the $k$-jets is given by the following matrix multiplication

\begin{equation}
[f'(0), f''(0)/2!, ..., f^{(k)}(0)/k!].
\left[ \begin{array}{ccccc}
a_1 & a_2 & a_3 & ... & a_k \\
0 & a_1^2 & 2a_1a_2 & ... & a_1a_{k-1}+...a_{k-1}a_1\\
0 & 0 & a_1^3 & ... & 3a_1^2a_{k-2}+...\\
. & . & . & ... & .\\
0 & 0 & 0 & ... & a_1^k 
\end{array} \right ].
\end{equation}

\noindent
The unipotent elements $U_k$ correspond to the matrices with $a_1=1$. We have $G_k=\mathbb{C}^* \times U_k$. Recall a $k$-jet $f:\mathbb{C} \to X$ is regular if $f'(0) \ne 0$. There are  embeddings 

\begin{equation}
\Phi:J_k^{reg}/G_k \hookrightarrow Grass(k, \bigoplus_{l\leq k}Sym^{l} \mathbb{C}^n) \hookrightarrow \mathbb{P}(\bigwedge^k \bigoplus_{l\leq k}Sym^{l} \mathbb{C}^n), 
\end{equation}

\noindent
where the second embedding is the Plucker embedding. Let $e_1 ,..,e_n$ be standard basis of $\mathbb{C}^n$. Then a basis of $\bigoplus_{l\leq k}Sym^{l} \mathbb{C}^n$ consists of 
$\{e_{i_1...i_s}=e_{i_1}...e_{i_s}, s \leq k\}$. Then a basis of $\mathbb{P}(\wedge^k \bigoplus_{l\leq k}Sym^{l} \mathbb{C}^n)$ is $\{e_{i_1}\wedge ... \wedge e_{i_s}, s \leq k $. Write

\begin{equation}
z=\Phi(e_1,...,e_k)=[e_1 \wedge (e_2 +e_1^2) \wedge ... \wedge ( \sum e_{i_1}...e_{i_k})].
\end{equation}

\noindent
The group $Gl(n)$ acts on $\Phi(e_1,...,e_k)$ by acting on each $e_i$ in the expression (7). The Plucker embedding above uses $SL_k$-orbit of the point $z$ as a highest weight vector of some representation. The properties of these orbits and also their compactifications has been studied in \cite{BK}. The group $G_k$ of transformations presented in (5) are not reductive, and their invariant theory do not satisfy the properties of the invariants of reductive groups. Specifically the problem of finite generation of the invariant fiber rings for jet differentials is still an open problem. 

\vspace{0.3cm}

\section{Formalism of Differential fields}

\vspace{0.3cm}

The rings we are considering in this section are all commutative with 1. A derivation of the ring $A$ is an additive mapping 

\begin{equation}
\delta:A \to A, \qquad \delta(ab)=b\delta(a) + a \delta(b).
\end{equation}

\noindent
Let $A$ be a differential field and $B$ a differential subfield. The differential Galois group $G$ of $A/B$ is the group of all differential automorphisms of $A$ living $B$ fixed. Then the same formalism like the Galois groups of usual fields appear here also. For any intermediate differential subfield $C$, denote the subgroup of $G$ living $C$ elementwise fixed by $C'$; and similar for any subgroup $H$ of $G$ denote by $H'$ the elements in $A$ fixed by that. Call a field or group closed if it is equal to its double prime. Now with these notations making PRIMED defines the Galois correspondence between closed subgroups and closed differential subfields.

The Wronskian of $n$ elements $y_1,...,y_n$ in a differential ring is defined as the determinant 

\begin{equation}
W(y_1,...,y_n)=\left|
\begin{array}{cccc}
y_1 & y_2 & ... & y_n \\
y_1' & y_2' & ... &y_n' \\
   &     &     &    \\
y_1^{(n)} &     &   &y_n^{(n-1)} 
\end{array} 
\right|.   
\end{equation}

\noindent
It is a quite well known that, $n$ elements in a differential field are linearly dependent over the field of constants if and only if their Wronskian vanishes. We will call an extension of the form $A=K\langle u_1,...,u_n\rangle$ with $u_1,...,u_n$ are solutions of 

\begin{equation}
L(y)=\frac{W(y,u_1,...,u_n)}{W(u_1,...,u_n)}=y^{(n)}+a_1y^{(n-1)}+...+a_ny=0
\end{equation}

\noindent
a Picard extension, cf. \cite{K}. 

\begin{theorem} \cite{K} We have the following
\begin{itemize}
\item[(1)] Let $K \subset L \subset M$ be differential fields. Suppose that $L$ is Picard over $K$ and $M$ has the same field of constants as $K$. Then any differential automorphism of $M$ over $K$ sends $L$ into itself.
\item[(2)] The differential Galois group of a Picard extension is an algebraic matrix group over the field of constants. 
\item[(3)] If $K$ has an algebraically closed constant field of characteristic $0$, and $M$ a Picard extenstion of $K$, then any differential isomorphism over $K$ between two intermediate fields extends to the whole $M$. In particular this also holds for any differential automorphism of an intermediate field 
over $K$. 
\item[(4)] Galois theory implements a one-to-one correspondence between the intermediate differential fields and the algebraic subgroups of the differential Galois group $G$. A closed subgroup $H$ is normal iff the corresponding field $L/K$ is normal, then $G/H$ is the full differential Galois group of $L$ over $K$.
\end{itemize} 
\end{theorem}

\noindent
In fact over a constant field of $char=0$ any differential isomorphism between intermediate fields extends to the whole differential field.
Let $A=K\langle u_1,...,u_n\rangle$ be a Picard extension and $W$ the Wronskian of $u_1,...,u_n$. A basic fact about the Wronskians is that for a differential automorphism $\sigma$ of $A$; we have $\sigma(W)=|c_{ij}|W$. Therefore $W$ is fixed by $\sigma$ if and only if $ |c_{ij}| =1$. 

A family of elements $(x_i)_{i \in I}$ is called differential algebraic independent; if the family $(x_i^{(j)})_{i \in I,j \geq 0}$ is algebraically independent over the field of constants, otherwise we call them dependent. An element $x$ is called differentially algebraic if the family consisting of $x$ only, is differential algebraic dependent. An extension is called differential algebraic if any element of it, is so. Finally we say $G$ is differentially finite generated over $F$ if there exists elements $x_1,...,x_n \in G$ such that $G$ is generated over $F$ by the family $(x_i^{(j)})_{1 \leq i \leq n,j \geq 0}$, cf. \cite{K}.

\begin{theorem} \cite{K}
Let $F \subset G$ be an extension of differential fields, then

\begin{itemize}
\item If $G=F\langle x_1,...,x_n\rangle$ and each $x_i$ is differential algebraic over $F$ then $G$ is finitely generated over $F$.
\item If $G$ is differential finite generated over $F$ and $F \subset E \subset G$ is an intermediate differential field, the $E$ is also differentially finite generated.
\end{itemize}
\end{theorem}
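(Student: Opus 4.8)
The plan is to prove the two assertions in turn, deducing the second from the first together with the ordinary theory of transcendence degree.

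For the first assertion I would reduce to a single generator by induction and exploit minimal differential relations. If $x$ is differentially algebraic over a differential field $K$ of characteristic $0$, then $x, x', x'', \dots$ are algebraically dependent over $K$, so there is a least $m$ for which $x^{(m)}$ is algebraic over $K_m := K(x, x', \dots, x^{(m-1)})$. Differentiating the minimal polynomial of $x^{(m)}$ over $K_m$, and using that in characteristic $0$ its derivative in the top variable is nonzero, one finds that $x^{(m+1)}$, and then inductively every higher derivative, already lies in $K(x, x', \dots, x^{(m)})$. Hence $K\langle x\rangle = K(x, x', \dots, x^{(m)})$ is finitely generated over $K$. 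Applying this with $K$ equal successively to $F, F\langle x_1\rangle, F\langle x_1, x_2\rangle, \dots$, and noting that each $x_i$ stays differentially algebraic over the enlarged base, shows that $G = F\langle x_1, \dots, x_n\rangle$ is finitely generated over $F$.

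For the second assertion I would argue through the differential transcendence degree. Since $G$ is differentially finitely generated over $F$ its differential transcendence degree is finite, and by the exchange property of differential algebraic dependence the intermediate field $E$ has a finite differential transcendence basis $z_1, \dots, z_d$, which I choose inside $E$. Extending it to a differential transcendence basis $z_1, \dots, z_d, w_1, \dots, w_e$ of $G$ over $F$ and putting $M = F\langle z_1, \dots, z_d, w_1, \dots, w_e\rangle$, the field $G$ becomes differentially algebraic and differentially finitely generated over $M$, so by the first assertion $G$ is finitely generated over $M$ as an ordinary field. The compositum $E\cdot M$ is then an intermediate field of the ordinary finitely generated extension $G/M$, hence is finitely generated over $M$ by the classical theorem on subextensions of finitely generated field extensions; consequently $E\cdot M$ is differentially finitely generated over $M$, and so over $F$.

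The remaining and, I expect, most delicate step is to descend from $E\cdot M = E\langle w_1, \dots, w_e\rangle$ back to $E$. The elements $w_1, \dots, w_e$ are differentially transcendental over $E$, so this is a purely differentially transcendental extension. Writing a finite differential generating set of $E\cdot M$ over $F$ as differential rational functions of the $w_i$ with coefficients in $E$, I let $E_0 \subseteq E$ be the differential subfield generated over $F$ by those finitely many coefficients; then $E_0$ is differentially finitely generated over $F$ and $E\langle w_1, \dots, w_e\rangle = E_0\langle w_1, \dots, w_e\rangle$. Since the derivatives $w_i^{(l)}$ are algebraically independent over $E$, comparing coefficients in any relation of the form $x\, Q(w) = P(w)$ with $x \in E$ forces $x$ to be a quotient of elements of $E_0$, whence $E = E_0$ is differentially finitely generated over $F$. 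The two points needing careful justification are the well-definedness and finiteness of the differential transcendence degree, via the exchange lemma for differential algebraic dependence, and this final coefficient comparison, which is the differential counterpart of recovering the coefficient field of a rational expression from the expression itself.
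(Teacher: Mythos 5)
The paper states this theorem without any proof of its own --- it is imported as background from the differential-algebra literature (Kaplansky's book, and in substance Kolchin) --- so there is no argument of record to compare yours against; judged on its own, your sketch is the standard proof and is correct. The first part is the classical single-generator reduction: differentiating the minimal polynomial of $x^{(m)}$ over $K(x,x',\dots,x^{(m-1)})$ and using characteristic $0$ to invert $P'(x^{(m)})$ shows $K\langle x\rangle=K(x,\dots,x^{(m)})$, and iterating over the generators finishes it. The second part correctly mimics the proof that intermediate fields of finitely generated ordinary field extensions are finitely generated: split off a differential transcendence basis, use part one over $M=F\langle z_1,\dots,z_d,w_1,\dots,w_e\rangle$ together with the classical subextension theorem to make $E\cdot M$ differentially finitely generated, then descend to $E$ by coefficient comparison. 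The two points you flag are indeed where the content lies. For (i), note that what you actually need is additivity of differential transcendence degree in towers: it is this, not merely the exchange property, that guarantees $w_1,\dots,w_e$ stay differentially algebraically independent over all of $E$ (rather than just over $F\langle z_1,\dots,z_d\rangle$), and that should be said explicitly since the final step collapses without it. For (ii), the comparison is sound: since the $w_i^{(l)}$ are algebraically independent over $E$, the identity $xQ(w)=P(w)$ forces $xq_\alpha=p_\alpha$ for every monomial coefficient, and any nonzero $q_\alpha$ exhibits $x$ as a ratio of elements of $E_0$. Modulo writing out (i), this is a complete proof.
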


\begin{remark} \cite{K}
Let $J=\mathbb{C}\langle \xi_1,,,\xi_k \rangle$ be a differential field obtained by adjoining $n$-differential indeterminates. Assume $g$ is a linear transformation that is given by the matrix $(c_{ij})$ on the variables $\xi_i$. Define $g$ on all the differential variables by

\begin{equation}
g.\ \xi_i^{(m)}=\sum c_{ij}\xi_j^{(m)}, \qquad m \geq 0.
\end{equation}

\noindent
Then $g$ is a differential automorphism of $J$. Define $L(y)$ by (11). Then $L(y)=0$ is a linear differential equation which $\xi_i$ are its indepentent solutions. $J$ is a Picard extension of $\mathbb{C}$ and its differential Galois group is the full linear group.
\end{remark}

\vspace{0.5cm}

\noindent
\textbf{Applications:} Apply the Galois theory to the coordinate ring of fibers of jet bundle. The fiber rings of the Green-Griffiths bundles $X_k$ and sheaves $E_{k,m}V^*$ are differential rings. We shall consider their quotient fields. The algebraic groups $GL_k=\mathbb{C}^* \times U_k$, $SL_k$ and also $G_k$ act linearly on differential variables and are differential Galois groups. As we explained the fixed field of $SL_k$ and the Galois group $1$ are the fields generated by the Wronskians and the Whole quotient field of the fibers. Therefore the middle group $U_k$ also has finitely generated 
fixed field, where we have used the criteria in the Theorems 2.1 and 2.2. Furthermore one finds that a possible choice of generators may include the fixed generators of $SL_k$, i.e. the Wronskians. By the Noether normalization theorem, there exists a finite number of generators $\wp_1,...,\wp_l$ such that the ring of fibers in $J_k(X)^{G_k}$ is algebraic over $\mathbb{C}\langle \wp_1,...,\wp_l\rangle$. It follows that $\mathbb{C}[(J_{k,x}(X)]^{G_k}=\mathbb{C}\langle \wp_1,...,\wp_l \rangle (\alpha_1,...,\alpha_n)$.

\begin{remark} \cite{M}
Assume $P$ and $Q$ are two local sections of the Green-Griffiths bundle, then the the operator $\nabla_j:f \mapsto f_j'$ is invariant under change of parameter on $\mathbb{C}$. Define a bracket operation as follows 

\begin{equation}
[P,Q]=\frac{1}{deg(P)}PdQ-\frac{1}{deg(Q)}QdP.
\end{equation}

\noindent
Later one successively defines the brackets

\begin{equation}
\begin{split}
[\nabla_j,\nabla_k] &=f_j'f_k''-f_j''f_k'\\
[\nabla_j,[\nabla_k, \nabla_l]] & =f_j'(f_k'f_l''-f_l'f_k'')-3f_j'(f_k'f_l''-...).
\end{split}
\end{equation}

\noindent
The sections produced by brackets generate the fiber rings of invariant jet section. 
\end{remark}

\noindent
ACKNOWLEDGEMENT: I thank the Abdus Salam School of Mathematical Sciences , GCU, for its research facilities and financial support.


\begin{thebibliography}{99}

\bibitem{BK} G. Berczi, F. Kirwan, A geometric construction for invariant jet differentials, preprint    

\bibitem{D2}  J. P. Demailly, Hyperbolic algebraic varieties and holomorphic differential equations, Acta Math. Vietnam, 37(4) 441-512, 2012

\bibitem{GG} M. Green, P. Griffiths, Two applications of algebraic geometry to entire holomorphic mappings, The Chern Symposium 1979. (Proc. Intern. Sympos., Berkeley, California, 1979) 41-74, Springer, New York, 1980.

\bibitem{K} I Kaplansky, An introduction to differential algebra, Pub. de L'institut de Math. de Univ. Nancao, Hermann Paris 1957

\bibitem{M} J. Merker, Application of computational invariant theory to Kobayashi hyperbolicityand to Green-Griffiths algebraic degeneracy, Journal of Symbolic Computation 45, 986-1074 (2010)

\end{thebibliography}
\end{document}